\newtheorem{thm}{Theorem}[section]
\newtheorem{lem}[thm]{Lemma}
\newtheorem{prop}[thm]{Proposition}
\newtheorem{defn}[thm]{Definition}
\theoremstyle{definition}
\theoremstyle{remark}
\newtheorem{rmk}[thm]{Remark}
\newcommand{\res}{\operatorname*{Res}}                             
\newcommand{\Clos}{\operatorname*{clos}}                           
\renewcommand{\Re}{\operatorname*{Re}}                             
\newcommand{\dist}{\operatorname*{dist}}					                 
\renewcommand{\d}{\ensuremath{\,\mathrm{d}}}							         
\newcommand{\DeltaP}{\Delta_{\mathrm{PDE}}\hspace{0.5mm}}          
\newcommand{\Mspacer}{\hspace{0.5mm}}                              
\newcommand{\M}[3]{#1_{#2\Mspacer#3}}                              
\renewcommand{\leq}{\leqslant}                                     
\newcommand{\BE}{\begin{equation}}                                 
\newcommand{\EE}{\end{equation}}                                   
\newcommand{\BES}{\begin{equation*}}                               
\newcommand{\EES}{\end{equation*}}                                 
\newcommand{\BP}{\begin{pmatrix}}                                  
\newcommand{\EP}{\end{pmatrix}}                                    
\newcommand{\N}{\mathbb{N}}                                        
\newcommand{\R}{\mathbb{R}}                                        
\newcommand{\C}{\mathbb{C}}                                        
\newcommand{\correctionemph}[1]{\emph{#1}}                         
\newcommand{\superscript}[1]{\ensuremath{^{\textrm{#1}}}}
\newcommand{\Thns}[0]{\superscript{th}}
\newcommand{\Th}[0]{\Thns~}
\newcommand{\rdns}[0]{\superscript{rd}}
\newcommand{\rd}[0]{\rdns~}
\def\clap#1{\hbox to 0pt{\hss#1\hss}}
\numberwithin{equation}{section}
\author{David A. Smith \\ ACMAC, University of Crete, Heraklion 71003, Crete, Greece \\ email\textup{: \texttt{d.a.smith@acmac.uoc.gr}}}
\title{Well-posedness and conditioning of 3\rd and higher \\ order two-point initial-boundary value problems}
\begin{document}
\maketitle

\abstract{We discuss initial-boundary value problems of arbitrary spatial order subject to arbitrary boundary conditions. We formalise the concept of the \emph{conditioning} of such a problem and show that it represents a necessary criterion for well-posedness. The other requirement for well-posedness, the convergence of certain series, is also analysed. We illustrate these results with a full classification of 3\rd order problems having non-Robin boundary conditions.

Part of this work is devoted to correcting an oversight in the author's earlier work \emph{Well-posed two-point initial-boundary value problems with arbitrary boundary conditions} in volume~152 of Math. Proc. Camb. Philos. Soc.}

\section{Introduction} \label{sec:Intro}

It was claimed in~\cite{Smi2012a} that the decay of certain mermorphic functions within given sectors and away from their poles is sufficient for well-posedness of an initial-boundary value problem.

However, this is incorrect without an additional condition, as shown by the counterexample given in section~\ref{sec:OldIncorrect}. This was implicit in~\cite[Proposition~4$\cdot$1]{FP2001a}, and we note here that in our previous result,~\cite[Lemma~3$\cdot$1]{Smi2012a}, we omit to justify convergence of a series arising from infinitely many zeros of $\DeltaP$ lying within the closure of $D$.

In this note, we correct this oversight and adjust the statements of results in~\cite{Smi2012a} to account for this correction. We also give a full classification of 3\rd order problems with non-Robin boundary conditions. The notation and definitions are all given in~\cite{Smi2012a}.

\subsection{Conditioning of a boundary value problem}

\begin{defn} \label{defn:Wellcond}
An initial-boundary value problem is \emph{well-conditioned} if it satisfies:

\emph{$\eta_j(\rho)$ is entire and the ratio
\BE \label{eqn:P1:Intro:thm.WellPosed:Decay}
\frac{\eta_j(\rho)}{\DeltaP(\rho)}\to0 \mbox{ as }\rho\to\infty \mbox{ from within } D, \mbox{ away from the zeros of } \DeltaP.
\EE
for each $j\in\{1\ldots,2n\}$.}

Otherwise, we say that the problem is \emph{ill-conditioned}.
\end{defn}

Essentially,~\cite{Smi2012a} can be corrected by replacing references to well- or ill-posed with the notion of well- or ill-conditioned problems; we go further by providing the correct necessary and sufficient conditions for an initial-boundary value problem to be well-posed. The following is the correct statement about well-posedness of an initial-boundary value problem.

\begin{thm} \label{thm:Wellposed}
Let $(\sigma_k)_{k\in\N}$ be a sequence containing each nonzero zero of the PDE characteristic matrix $\DeltaP$ associated with an initial-boundary value problem. Then the problem is well-posed if and only if it is well-conditioned and
\BE \label{eqn:Wellposed.Bound.on.zeros}
(\sigma_k)_k\in\N \mbox{ is asymptotically within } \Clos{(E)} \mbox{ with } \dist(\sigma_k,E)=O(k^{-(n-1)}).
\EE
\end{thm}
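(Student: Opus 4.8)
The plan is to establish both implications by reducing well-posedness to the convergence of a residue series extracted from the unified-transform representation of the solution. I would first write the candidate solution as a finite sum over $j\in\{1,\ldots,2n\}$ of contour integrals along $\partial D$, each with integrand $\eta_j(\rho)/\DeltaP(\rho)$ multiplied by the relevant transform of the initial and boundary data and by the exponential $e^{i\rho x}$ together with the dispersive time exponential, whose exponent is homogeneous of degree $n$ in $\rho$. Well-posedness is equivalent to this representation defining a function that solves the PDE, attains the prescribed data, and depends continuously on it; after deforming each contour into $D$ this reduces to requiring that the series of residues collected at the zeros $(\sigma_k)$ of $\DeltaP$ lying in $D$ converges and is continuous in the data.

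For sufficiency, I would first invoke well-conditioning: the decay~\eqref{eqn:P1:Intro:thm.WellPosed:Decay} guarantees that each contour integral converges and that the deformation collecting the residues is legitimate --- precisely the step whose justification was omitted in~\cite[Lemma~3$\cdot$1]{Smi2012a}. Each residue at $\sigma_k$ carries a coefficient $\eta_j(\sigma_k)/\DeltaPprime(\sigma_k)$ and the factor $e^{i\sigma_k x}$ times the time exponential evaluated at $\sigma_k$. The decisive point is that the distribution condition~\eqref{eqn:Wellposed.Bound.on.zeros} keeps these exponential factors uniformly bounded: on $\partial E$ the real part of the time exponent vanishes, its gradient grows like $|\rho|^{n-1}$, and since $|\sigma_k|=O(k)$ a displacement $\dist(\sigma_k,E)=O(k^{-(n-1)})$ perturbs the exponent by only $O(1)$. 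Combined with the decay of the residue coefficients inherited from well-conditioning, this yields absolute convergence of the residue series; I would then check term by term that its sum solves the PDE, attains the data, and obeys a bound uniform in the data, giving continuous dependence.

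For necessity I would argue by contraposition. If well-conditioning fails, the integrand does not decay and the representation diverges for generic data, so the problem is ill-posed. If instead~\eqref{eqn:Wellposed.Bound.on.zeros} fails, then along a subsequence either $\sigma_k$ escapes every neighbourhood of $\Clos(E)$ or $\dist(\sigma_k,E)$ exceeds $Ck^{-(n-1)}$; reversing the estimate above, the corresponding exponential factors then grow without bound along that subsequence. I would use this to exhibit data whose residue series diverges, or equivalently invoke the uniform boundedness principle to rule out any continuous solution map, contradicting well-posedness.

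The main obstacle I anticipate is twofold. First, controlling the residue denominators requires a lower bound on $|\DeltaPprime(\sigma_k)|$, equivalently adequate separation of the zeros, so that the coefficients $\eta_j(\sigma_k)/\DeltaPprime(\sigma_k)$ genuinely decay rather than merely the ratio along paths that avoid the zeros; reconciling the pointwise behaviour at the $\sigma_k$ with the sectorial decay in~\eqref{eqn:P1:Intro:thm.WellPosed:Decay} is delicate. Second, in the necessity direction one must produce admissible data forcing divergence exactly when the exponent $n-1$ is violated, which hinges on pinning the constant in the gradient estimate so that $n-1$ emerges as the sharp critical exponent.
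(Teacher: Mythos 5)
Your overall strategy---reduce well-posedness to the convergence of the residue series in the solution representation, use well-conditioning to justify the contour deformation, and translate the distance condition~\eqref{eqn:Wellposed.Bound.on.zeros} into an $O(1)$ perturbation of the time exponent via the gradient estimate $|\nabla\Re(a\rho^n)|\sim|\rho|^{n-1}$ with $|\sigma_k|=O(k)$---is the same as the paper's, which first proves (Lemma~\ref{lem:Wellposed1}) that well-posedness is equivalent to well-conditioning plus convergence of the series~\eqref{eqn:SeriesThatMustDecay}, and then shows the bound on the zeros is equivalent to that convergence. Your necessity argument also matches: failure of the bound makes $\Re(-a\sigma_k^n)$ unbounded above, so the series diverges for generic data at any $t>0$.

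The genuine gap is in your sufficiency step, and it is exactly the obstacle you flag at the end without resolving. You claim ``absolute convergence of the residue series'' from ``the decay of the residue coefficients inherited from well-conditioning,'' but well-conditioning only controls $\eta_j(\rho)/\DeltaP(\rho)$ \emph{away from} the zeros of $\DeltaP$; it gives no lower bound on $|\DeltaPprime(\sigma_k)|$ and hence no decay of the individual residues $\eta_j(\sigma_k)/\DeltaPprime(\sigma_k)$. In general the residue series is not absolutely convergent, so this route cannot close. The paper's proof avoids the issue entirely by a different idea: at $t=0$ (and with the exponentials stripped out) the two residue series are recognised as the expansion of the initial datum in a system of generalized eigenfunctions of the spatial differential operator, and the \emph{minimality} of that system guarantees convergence of the coefficient series for all admissible data; the bound~\eqref{eqn:Wellposed.Bound.on.zeros} then supplies uniformly bounded multipliers $e^{i\sigma_k x}$, $e^{i\sigma_k(x-1)}$ and $e^{-a\sigma_k^n t}$, from which convergence of the full series~\eqref{eqn:SeriesThatMustDecay} follows. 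Without importing that spectral/biorthogonal input (or some substitute giving summability of the residue coefficients), your argument does not establish the ``if'' direction.
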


The asymptotic bound on the positions of the zeros of $\DeltaP$ is used to ensure the convergence of the series
\BE \label{eqn:SeriesThatMustDecay}
\sum_{k\in K^+}\res_{\rho=\sigma_k}\frac{e^{i\rho x-a\rho^nt}}{\DeltaP(\rho)} \sum_{j\in J^+} \zeta_j(\rho), \qquad
\sum_{k\in K^-}\res_{\rho=\sigma_k}\frac{e^{i\rho(x-1)-a\rho^nt}}{\DeltaP(\rho)} \sum_{j\in J^-} \zeta_j(\rho),
\EE
which appear in the representation of the solution~\cite[Theorem~1$\cdot$3]{Smi2012a}.

Of the criteria for well-posedness: well-conditioning and the asymptotic bound on the zeros of $\DeltaP$, neither is stronger than the other (see Remark~\ref{rmk:Neither.Stronger}). Nevertheless, the satisfaction/failure of each criterion may be derived from the form of $\DeltaP$, so it may be possible to unite the two criteria.

\begin{rmk}
The theorem of Hille and Yosida (see, for example,~\cite{Paz1983a}) provides necessary and sufficient conditions for an initial-boundary value problem to be well-posed on the semi-infinite-time domain $(x,t)\in[0,1]\times[0,\infty)$. In the finite-time setting we consider, it is not necessary that the associated semigroup of solution operators be a contraction semigroup; infinite-time blowup of the solution is of no concern. Indeed, as the time variable is bounded, the solution operators do not even form a semigroup. Nevertheless, the weaker bound on the zeros of $\DeltaP$ is necessary to prevent the \emph{instantaneous} blowup of the solution familiar from the reverse-time heat equation. This may be compared with the theory of quasicontraction semigroups, as described in~\cite[Chapter~12]{RR2004a}.
\end{rmk}

\subsection{3\rd order non-Robin problems} \label{ssec:Intro.3nRBC}
We give a complete classification of 3\rd order initial-boundary value problems with non-Robin
boundary conditions. We show that these problems fall into six classes.
\begin{itemize}
  \item[{\bf(I)}]{The zeros of $\DeltaP$ obey~\eqref{eqn:Wellposed.Bound.on.zeros} for both $a=i$ and $a=-i$. \\ The problem is well-conditioned for neither $a=i$ nor $a=-i$.}
  \item[{\bf(II)}]{The zeros of $\DeltaP$ obey~\eqref{eqn:Wellposed.Bound.on.zeros} for $a=i$ only. \\ The problem is well-conditioned for $a=i$ only.}
  \item[{\bf(III)}]{The zeros of $\DeltaP$ obey~\eqref{eqn:Wellposed.Bound.on.zeros} for $a=-i$ only. \\ The problem is well-conditioned for $a=-i$ only.}
  \item[{\bf(IVa)}]{The zeros of $\DeltaP$ obey~\eqref{eqn:Wellposed.Bound.on.zeros} for $a=i$ only. \\ The problem is well-conditioned for both $a=i$ and $a=-i$.}
  \item[{\bf(IVb)}]{The zeros of $\DeltaP$ obey~\eqref{eqn:Wellposed.Bound.on.zeros} for both $a=i$ and $a=-i$. \\ The problem is well-conditioned for both $a=i$ and $a=-i$.}
  \item[{\bf(IVc)}]{The zeros of $\DeltaP$ obey~\eqref{eqn:Wellposed.Bound.on.zeros} for $a=-i$ only. \\ The problem is well-conditioned for both $a=i$ and $a=-i$.}
\end{itemize}
Class~{\bf(I)} corresponds to problems that are ill-posed for both direction coefficients. Classes~{\bf(II)} and~{\bf(III)} contain problems that are well-posed in one direction only and whose solutions do not admit representation by a series. The solution of a problem belonging to class~{\bf(IV)} may be represented without an integral. Only problems of class~{\bf(IVb)} are well-posed in both directions.

In Section~\ref{sec:3nRBC} we provide a method of determining to which class a given problem belongs by inspection of the boundary conditions. This expands upon the work begun in~\cite[Section~5$\cdot$1]{Smi2012a}.

\begin{rmk}
It is possible to extend this classification to non-Robin problems and to arbitrary odd order problems without great modification. However, for even order problems the situation is quite different. This is due to the power of~\cite[Theorem~1$\cdot$5]{Smi2012a} (see also Appendix~\ref{sec:Corrections}) and the failure of~\cite[Theorem~6$\cdot$3]{Smi2012a} (see~\cite[Remark~6$\cdot$4]{Smi2012a}) for even order problems.
\end{rmk}

\section{Characterisation of well-posedness} \label{sec:WellposedCorrect}

We provide an amendment of~\cite[Theorem~1$\cdot$1]{Smi2012a}. This corrected form is then refined to give the more useful characterisation of well-posedness appearing in Theorem~\ref{thm:Wellposed} of the present note.

\begin{lem} \label{lem:Wellposed1}
An initial-boundary value problem is well-posed if and only if it is well-conditioned and the series~\eqref{eqn:SeriesThatMustDecay} both converge.
\end{lem}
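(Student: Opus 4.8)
The plan is to read off both implications from the explicit solution representation of \cite[Theorem~1$\cdot$3]{Smi2012a}, in which $u(x,t)$ is written as the sum of two pieces: a collection of contour integrals whose integrands are the ratios $\eta_j(\rho)/\DeltaP(\rho)$ multiplied by the kernels $e^{i\rho x-a\rho^nt}$ (integrated along the real line and along the boundaries of the relevant sectors), together with the residue series~\eqref{eqn:SeriesThatMustDecay}, built from the $\zeta_j$, contributed by those zeros of $\DeltaP$ that the contour deformation crosses. The equivalence then reduces to determining when this formula defines a genuine solution that depends continuously on the data.

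For the sufficiency direction, I would assume the problem is well-conditioned (Definition~\ref{defn:Wellcond}) and that both series in~\eqref{eqn:SeriesThatMustDecay} converge. Well-conditioning supplies the decay $\eta_j(\rho)/\DeltaP(\rho)\to0$ within $D$, which, combined with the exponential decay of the kernel $e^{-a\rho^nt}$ on the deformed contours, dominates the integrands and shows that the contour-integral part converges, defines a smooth function of $(x,t)$, and depends continuously on the data. The convergence hypothesis on~\eqref{eqn:SeriesThatMustDecay} then makes the remaining term well-defined. One must finally verify that the resulting $u$ satisfies the PDE, the boundary conditions, and the initial condition; this is done by differentiating under the integral and term-by-term in the series, the interchanges being justified by the locally uniform convergence afforded by the same two decay mechanisms.

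For the necessity direction, I would first argue that well-conditioning is forced by well-posedness: if the ratio $\eta_j/\DeltaP$ failed to decay within $D$, one could construct data for which the solution operator is unbounded on $[0,1]\times[0,T]$, producing the instantaneous blow-up alluded to in the earlier remark and contradicting continuous dependence. Granting well-conditioning, the contour-integral terms are already known to converge, so the series~\eqref{eqn:SeriesThatMustDecay}, being the difference between the well-defined solution $u$ and those terms, must itself converge. This is exactly the step whose convergence was tacitly assumed but not justified in \cite[Lemma~3$\cdot$1]{Smi2012a}.

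The main obstacle is the rigorous treatment of the residue series~\eqref{eqn:SeriesThatMustDecay}. Unlike the contour integrals, whose convergence is controlled uniformly by the decay of the integrand, the series runs over the infinitely many zeros $\sigma_k$ of $\DeltaP$ inside $\Clos{(D)}$, and nothing in the well-conditioning hypothesis alone guarantees that the individual residues are summable --- precisely the gap identified in the introduction. Establishing that term-by-term differentiation of the series is permissible, and that the differentiated series still converges locally uniformly in $(x,t)$, is therefore the delicate part of the sufficiency argument; the corresponding necessity claim, that failure of well-conditioning destroys continuous dependence, is the delicate part of the converse.
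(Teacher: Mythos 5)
Your proposal is correct and follows essentially the same route as the paper: both directions are read off from the solution representation of \cite[Theorem~1$\cdot$3]{Smi2012a}, with sufficiency obtained by patching the proof of \cite[Lemma~3$\cdot$1]{Smi2012a} (well-conditioning controlling the contour integrals, the added convergence hypothesis supplying exactly the step that was unjustified there) and necessity of the series' convergence following from the fact that any solution of a well-posed problem must admit that representation. The paper's own proof is just a terse deferral to these two points; you have merely spelled out the details it leaves implicit.
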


\begin{proof}
Given these properties, the proof of~\cite[Lemma~3$\cdot$1]{Smi2012a} can be corrected. The additional condition of the decay of these series ensures that statement below equation~(3$\cdot$8) holds.

If a problem is well-posed then its solution must be expressible in a form using these series (the original~\cite[Theorem~1$\cdot$3]{Smi2012a} has a typographical error, see the correction in Appendix~\ref{sec:Corrections}) therefore their convergence is necessary for well-posedness.
\end{proof}

\begin{proof}[Proof of Theorem~\ref{thm:Wellposed}]
Assume that the bound on the locations of the zeros of $\DeltaP$ does not hold. Then for all $R>0$ there exists some $k\in K^+$ such that
\BE
\dist(\sigma_k,\{\rho\in\C:\Re(a\rho^n)>0\})>\frac{R}{k^{n-1}}.
\EE
(Note that $K^+\cup K^-=\N$ and if there is some such $k\in K^-$ then, by~\cite[Lemma~6$\cdot$1]{Smi2012a}, there is also such a $k\in K^+$.) Hence
\BE
\dist(-a\sigma_k^n,\{\rho\in\C:\Re(\rho)<0\})>R
\EE
and $\{\Re(-a\sigma_k^n):k\in K^+\}$ is unbounded above. But then, for general initial data, the first series~\eqref{eqn:SeriesThatMustDecay} diverges for all $t>0$.

Inverting the above geometrical argument, it is immediate that if the asymptotic bound on the zeros of $\DeltaP$ holds then the real parts of the exponents $-a\sigma_k^nt$ are bounded above by $\max\{0,RT\}$, where $T$ is the final time. The exponentials $e^{i\sigma_kx}$ and $e^{i\sigma_k(x-1)}$ are bounded for $k\in K^+$ and $k\in K^-$ respectively. The series
\BE
\sum_{k\in K^+}\res_{\rho=\sigma_k}\sum_{j\in J^+} \frac{\zeta_j(\rho)}{\DeltaP(\rho)}, \qquad
\sum_{k\in K^-}\res_{\rho=\sigma_k}\sum_{j\in J^-} \frac{\zeta_j(\rho)}{\DeltaP(\rho)}
\EE
represent an expansion of the initial datum in a system of generalized eigenfunctions of the spatial differential operator. The minimality of this system guarantees their convergence for all initial data and convergence of the full series~\eqref{eqn:SeriesThatMustDecay} follows.
\end{proof}

\section{Classification of 3\rd order non-Robin problems} \label{sec:3nRBC}

We consider problems $\Pi(3,A,a,h,q_0)$ for which $A$ specifies non-Robin boundary conditions in the sense that each boundary condition contains only one order of partial derivative. Despite excluding boundary conditions of mixed order, this category is sufficiently rich in interesting examples to capture the variety of the full class while avoiding some of the notational complexities required to discuss the latter.

For such problems, the PDE characteristic determinant has one of the four forms:
\begin{subequations} \label{eqn:DeltaP.all}
\begin{alignat}{2} \label{eqn:DeltaP.I}
\mbox{\bf(I)} &\hspace{0.6em}   & \DeltaP(\rho) &= M(\rho), \\ \label{eqn:DeltaP.II}
\mbox{\bf(II)} &\hspace{0.6em}  & \DeltaP(\rho) &= M(\rho)\left[ X + \sum_{r=0}^2\omega^{Wr}e^{i\omega^r\rho} \right], \\ \label{eqn:DeltaP.III}
\mbox{\bf(III)} &\hspace{0.6em} & \DeltaP(\rho) &= M(\rho)\left[ X + \sum_{r=0}^2\omega^{Wr}e^{-i\omega^r\rho} \right], \\ \label{eqn:DeltaP.IV}
\mbox{\bf(IV)} &\hspace{0.6em}  & \DeltaP(\rho) &= M(\rho)\left[ X + \sum_{r=0}^2\omega^{Wr}\left( e^{i\omega^r\rho} + Ye^{-i\omega^r\rho} \right) \right], \hspace{8.2em}
\end{alignat}
\end{subequations}
where $M$ is a monomial with $1\leq\deg M\leq5$, $X\in\C$, $Y\in\R\setminus\{0\}$ and $W\in\{-1,0,1\}$.

\begin{prop} \label{prop:3nRBC.ZerosDeltaP}
The zeros of $\DeltaP$ occur at $\lambda_0=0$, $\omega^j\lambda_k$ and $\omega^j\mu_k$, for $j\in\{0,1,2\}$, $k\in\N$, where
\begin{itemize}
  \item[{\bf(I)}]{No such $\lambda_k$ or $\mu_k$.}
  \item[{\bf(II)}]{If $X=0$ then
    \BE
    \lambda_k = \left(2k-1-\frac{2W}{3}\right)\frac{\pi}{\sqrt{3}} + O(e^{-\sqrt{3}k\pi}).
    \EE
    If $X\neq0$ then
    \BE
    \lambda_k = \left(2k-1-\frac{2W}{3}\right)\frac{\pi}{\sqrt{3}} + O(e^{-k\pi/\sqrt{3}}).
    \EE
    No such $\mu_k$.}
  \item[{\bf(III)}]{If $X=0$ then
    \BE
    \lambda_k = -\left(2k-1-\frac{2W}{3}\right)\frac{\pi}{\sqrt{3}} + O(e^{-\sqrt{3}k\pi}).
    \EE
    If $X\neq0$ then
    \BE
    \lambda_k = -\left(2k-1-\frac{2W}{3}\right)\frac{\pi}{\sqrt{3}} + O(e^{-k\pi/\sqrt{3}}).
    \EE
    No such $\mu_k$.}
  \item[{\bf(IV)}]{If $Y>0$ then
    \begin{subequations}
    \begin{align}
    \lambda_k &= \left(2k-1+\frac{2W}{3}\right)\pi + i\log|Y| + O(e^{-\sqrt{3}k\pi}), \\
    \mu_k &= -\left(2k-1+\frac{2W}{3}\right)\pi + i\log|Y| + O(e^{-\sqrt{3}k\pi}).
    \end{align}
    \end{subequations}
    If $Y<0$ then
    \begin{subequations}
    \begin{align}
    \lambda_k &= \left(2k+\frac{2W'}{3}\right)\pi + i\log|Y| + O(e^{-\sqrt{3}k\pi}), \\
    \mu_k &= -\left(2k+\frac{2W'}{3}\right)\pi + i\log|Y| + O(e^{-\sqrt{3}k\pi}),
    \end{align}
    \end{subequations}
    where $W'\in\{-2,-1,0\}$ such that $W'=W$ $(\mathrm{mod}$ $3)$.}
\end{itemize}
\end{prop}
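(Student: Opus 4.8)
The plan is to locate the zeros of each of the four forms of $\DeltaP$ in~\eqref{eqn:DeltaP.all} by treating them as perturbations of a dominant exponential balance. Since $M$ is a monomial, its only zero is at $\rho=0$, which accounts for $\lambda_0=0$; thereafter I would divide through by $M(\rho)$ and study the zeros of the bracketed exponential sum. The threefold rotational structure is immediate: if $\rho$ is a zero of the bracket, so is $\omega^j\rho$, because replacing $\rho\mapsto\omega\rho$ permutes the summands $\omega^{Wr}e^{\pm i\omega^r\rho}$ cyclically (sending the index $r\mapsto r+1$ and multiplying by $\omega^{-W}$), leaving the sum invariant up to the overall factor $\omega^{-W}$. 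Hence it suffices to find the zeros in one sector and generate the rest by the $\omega^j$-action, which explains why the proposition lists $\omega^j\lambda_k$ and $\omega^j\mu_k$.

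For the genuinely exponential cases~\eqref{eqn:DeltaP.II}--\eqref{eqn:DeltaP.IV}, the strategy is a standard large-$\rho$ asymptotic analysis of entire functions of exponential type. I would fix attention on a sector in which exactly one of the three terms $e^{i\omega^r\rho}$ (or $e^{-i\omega^r\rho}$) dominates and the others are exponentially small, reducing the transcendental equation to $e^{i\rho}=-X^{-1}(\text{leading})$ type form; the zeros then sit near the solutions of $\cos$- or $\sin$-type equations, spaced by $\pi/\sqrt3$ or $\pi$ along the relevant ray. Concretely, for case~\eqref{eqn:DeltaP.II} the balance is between the $r$-terms whose exponents have comparable real part along the ray $\arg\rho$ where two of the $\Re(i\omega^r\rho)$ coincide; writing $\rho$ large and real (after the appropriate rotation) and expanding, the leading equation has roots at $\rho=(2k-1-\tfrac{2W}{3})\pi/\sqrt3$, and the residual term is controlled by the size of the subdominant exponential, which is $O(e^{-\sqrt3 k\pi})$ when $X=0$ (so two terms balance exactly) and only $O(e^{-k\pi/\sqrt3})$ when $X\neq0$ (since the constant $X$ then competes with the second-smallest exponential). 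Case~\eqref{eqn:DeltaP.III} is the mirror image under $\rho\mapsto-\rho$, giving the sign change in $\lambda_k$. For case~\eqref{eqn:DeltaP.IV} both $e^{i\omega^r\rho}$ and $Ye^{-i\omega^r\rho}$ are present, so the relevant balance is $e^{i\omega^r\rho}\approx -Y e^{-i\omega^r\rho}$, i.e. $e^{2i\omega^r\rho}\approx -Y$; solving $2i\rho=\log(-Y)+2\pi i k$ produces the real part $(2k-1\pm\cdots)\pi$ together with the imaginary shift $i\log|Y|$, and the parity of the integer offset depends on $\sgn Y$ through whether $\arg(-Y)$ equals $0$ or $\pi$, which is exactly the distinction between the $Y>0$ and $Y<0$ subcases and the appearance of $W'$.

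To make each asymptotic rigorous I would invoke Rouch\'e's theorem on a sequence of expanding contours (say circles of radius $R_k$ chosen to pass between consecutive approximate roots), comparing the full bracket with its dominant two-term truncation to conclude that each approximate root is accompanied by exactly one true zero at the stated distance; the exponential error estimates come from bounding the discarded terms on these contours. The main obstacle I anticipate is the careful bookkeeping of which exponential dominates in which subsector and the matching of errors across the sector boundaries (the anti-Stokes lines where two exponents have equal real part), since it is precisely there that the naive two-term balance degenerates and one must verify that no spurious zeros are created or genuine zeros lost; getting the error orders to come out as $e^{-\sqrt3 k\pi}$ versus $e^{-k\pi/\sqrt3}$ hinges on correctly identifying the gap between the second- and third-largest terms, and this is where the role of $X$ (present or zero) and of the constant versus exponential competition must be tracked with care.
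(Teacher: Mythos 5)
Your overall strategy---divide out the monomial $M$, use the order-3 rotational symmetry to reduce to one ray, locate zeros by a dominant two-term exponential balance with the error controlled by the next-largest term, obtain \textbf{(III)} from \textbf{(II)} via $\rho\mapsto-\rho$, and make it rigorous with Rouch\'e on contours threading between approximate roots---is essentially the paper's argument (the paper cites its earlier results for the location of the rays and does the leading-order balance directly, without invoking Rouch\'e by name). However, there is a concrete error in your treatment of case \textbf{(IV)}. You pair $e^{i\omega^r\rho}$ with $Ye^{-i\omega^r\rho}$ for the \emph{same} $r$ and reduce to $e^{2i\rho}\approx-Y$. On the ray parallel to the positive real axis ($\rho=s$ large real) the moduli are $|e^{i\omega^2\rho}|=|e^{-i\omega\rho}|=e^{\sqrt{3}s/2}$, while the two $r=0$ terms are both $O(1)$; so the dominant balance is between terms with \emph{different} indices, namely $\omega^{2W}e^{i\omega^2\rho}$ and $Y\omega^{W}e^{-i\omega\rho}$, whose ratio involves $e^{i(\omega^2+\omega)\rho}=e^{-i\rho}$. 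This gives $\omega^{W}e^{-i\rho}=-Y$, hence zeros spaced by $2\pi$ with imaginary part $+\log|Y|$ and real part $(2k-1+\tfrac{2W}{3})\pi$ when $Y>0$, exactly as stated. Your equation $e^{2i\rho}=-Y$ would instead produce spacing $\pi$ and imaginary part $-\tfrac12\log|Y|$, contradicting the proposition, so that step as written fails.

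A smaller point of the same kind in case \textbf{(II)}: the two exponentials that balance there are $\omega^{W}e^{i\omega\rho}$ and $\omega^{2W}e^{i\omega^2\rho}$ (both of modulus $e^{y/2}$ on the ray where the zeros accumulate, with $X$ and $e^{i\rho}$ subdominant), not ``$e^{i\rho}$ against $X$'' as your sketch suggests; the paper's proof reduces to $1+\omega^{W}e^{\sqrt{3}(xi+y+z)}=o(1)$ and reads off $y=0$ and the quantisation condition from that. Your description of why the error is $O(e^{-\sqrt{3}k\pi})$ when $X=0$ versus $O(e^{-k\pi/\sqrt{3}})$ when $X\neq0$ (the gap to the third-largest term, with the constant $X$ competing when present) is the right mechanism and matches the paper's computation, but it only comes out correctly once the dominant pair is identified properly in each case.
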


\begin{proof}
{\bf(I)} is immediate.

By the symmetry of the expressions~\eqref{eqn:DeltaP.all}, if $\DeltaP(\rho)=0$ then $\DeltaP(\omega\rho)=0$.

By the results of~\cite[Section~6]{Smi2012a}, the only solutions of~\eqref{eqn:DeltaP.II} (up to rotational symmetry of order 3) lie asymptotically on a ray parallel to the positive imaginary axis. Solving~\eqref{eqn:DeltaP.II} for $\lambda_k=xi +y+z$ with $\R^+\ni x=O(k)$, $\R\ni y=$ constant, $\C\ni z=o(1)$, we obtain
\BES
\omega^W e^{i\omega\rho} + \omega^{2W} e^{i\omega^2\rho} = \begin{cases}O(1) & X\neq0, \\ O(e^{-x}) & X=0, \end{cases}
\EES
hence
\BES
1 + \omega^{W} e^{\sqrt{3}(xi+y+z)} = \begin{cases}O(e^{-x/2}) & X\neq0, \\ O(e^{-3x/2}) & X=0. \end{cases}
\EES
But then $y=0$,
\BES
\sqrt{3}x + \frac{2W\pi}{3} = (2k-1)\pi \quad\mbox{and}\quad z = \begin{cases}O(e^{-x/2}) & X\neq0, \\ O(e^{-3x/2}) & X=0 \end{cases}
\EES
and result~{\bf(II)} follows.

Result~{\bf(III)} follows from~{\bf(II)} using the map $\rho\mapsto-\rho$.

A similar argument justifies~{\bf(IV)} but we must consider zeros lying on two rays parallel to the positive and negative real axes.
\end{proof}

\begin{prop} \label{prop:3nRBC.ClassificationDeltaP}
The forms of $\DeltaP$ \textup{\bf(I)}--\textup{\bf(IV)} above correspond to the classes \textup{\bf(I)}--\textup{\bf(IV)} defined in section~\ref{ssec:Intro.3nRBC} with \textup{\bf(IVa)} if $|Y|<1$, \textup{\bf(IVb)} if $|Y|=1$ and \textup{\bf(IVc)} if $|Y|>1$.
\end{prop}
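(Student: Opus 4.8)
The plan is to treat the two defining properties of the classes separately for each of the four forms: the asymptotic bound~\eqref{eqn:Wellposed.Bound.on.zeros} on the zeros, which I read off directly from Proposition~\ref{prop:3nRBC.ZerosDeltaP}, and well-conditioning, which I analyse through the dominant exponential behaviour of $\DeltaP$ against the numerators $\eta_j$ on the sectors constituting $D$. Throughout I use that, for $n=3$, the plane splits into six sectors of opening $\pi/3$ on which $\Re(a\rho^3)$ has constant sign; writing $E_a=\{\rho:\Re(a\rho^3)\geq0\}$ and $D_a=\{\rho:\Re(a\rho^3)<0\}$, the regions $E_i$ and $E_{-i}$ are complementary, $E_i$ being the three closed sectors bisected by the positive imaginary axis and its rotations by $\omega,\omega^2$, and $E_{-i}$ those bisected by the negative imaginary axis and its rotations. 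By the threefold symmetry noted in the proof of Proposition~\ref{prop:3nRBC.ZerosDeltaP}, it suffices to test one representative ray of zeros against these sectors.

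For the bound on the zeros, form~\textbf{(I)} has no nonzero zeros, so~\eqref{eqn:Wellposed.Bound.on.zeros} holds vacuously for both $a=i$ and $a=-i$. For form~\textbf{(II)} the zeros lie asymptotically on the positive imaginary axis and its $\omega$-rotations, i.e.\ on the bisecting rays of $E_i$; hence they lie eventually inside $E_i$, giving $\dist(\sigma_k,E_i)=0$, while their distance to $E_{-i}$ grows like $k$, so the bound holds for $a=i$ only. Form~\textbf{(III)} is the image of~\textbf{(II)} under $\rho\mapsto-\rho$ and gives $a=-i$ only. For form~\textbf{(IV)} the zeros approach the horizontal lines $\Im\rho=\log|Y|$ at large positive ($\lambda_k$) and large negative ($\mu_k$) real part, so the sign of $\log|Y|$ decides matters: if $|Y|<1$ the zeros sit a fixed height below the real axis, hence inside $E_i$ but a bounded distance from $E_{-i}$; if $|Y|>1$ the situation is reversed; and if $|Y|=1$ they lie on the common boundary up to the correction $O(e^{-\sqrt3k\pi})=O(k^{-2})$. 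This yields the bound for $a=i$ only, for $a=-i$ only, and for both, matching~\textbf{(IVa)},~\textbf{(IVc)} and~\textbf{(IVb)} respectively.

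For well-conditioning the guiding principle, which I would justify through the dominant-balance analysis of Section~6 of~\cite{Smi2012a} (in particular~\cite[Theorem~6$\cdot$3]{Smi2012a}), is that the problem is well-conditioned for $a=i$ precisely when $\DeltaP$ carries the exponentials $e^{i\omega^r\rho}$, and for $a=-i$ precisely when it carries the $e^{-i\omega^r\rho}$. The reason is that the family peaking (attaining the maximal rate $e^{|\rho|}$) on the sectors of $D_i$ is $\{e^{i\omega^r\rho}\}$, and that peaking on $D_{-i}$ is $\{e^{-i\omega^r\rho}\}$; since the numerators $\eta_j$ always carry both families, on each sector of $D_a$ some $\eta_j$ grows at this maximal rate, and $\DeltaP$ can match it—and hence, with the help of the monomial factor $M$, dominate it—only if it too carries the peaking family. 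If $\DeltaP$ lacks that family its growth on $D_a$ is of strictly smaller exponential order and $\eta_j/\DeltaP$ blows up. Applying this: form~\textbf{(I)} carries neither family, so its monomial denominator is beaten by the exponential numerators in both directions (well-conditioned for neither); forms~\textbf{(II)} and~\textbf{(III)} carry one family each (well-conditioned for $a=i$, resp.\ $a=-i$, only); and form~\textbf{(IV)} carries both (well-conditioned for both, independently of $Y$). Combining the two analyses reproduces exactly the six classes.

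I expect the main obstacle to be the well-conditioning in the direction for which $\DeltaP$ does carry the peaking family. There the dominant term of $\DeltaP$ only ties with the same-family dominant term of $\eta_j$, so the crude $e^{|\rho|}$-versus-lower-order comparison settles only the non-peaking family; resolving the tie for every $j$ requires the monomial prefactor $M$ together with the explicit form of the $\eta_j$ from~\cite{Smi2012a} to secure strict decay, with additional care on the rays bounding the sectors of $D_a$, where two exponentials balance and the zeros of $\DeltaP$ accumulate. The remaining points—the vacuous case~\textbf{(I)}, the sign dichotomy for~\textbf{(IV)}, and the borderline $|Y|=1$—are then routine.
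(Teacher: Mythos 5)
Your proposal is correct and follows essentially the same route as the paper: the zero locations are read off from Proposition~\ref{prop:3nRBC.ZerosDeltaP} and compared against the closed sectors of $E$ for $a=\pm i$, while well-conditioning is decided sector by sector by comparing the peak exponential growth of the $\eta_j$ against that of $\DeltaP$, with the conclusion that $\DeltaP$ must carry the exponential family that peaks on the sectors of $D$ (the paper does this concretely for form \textbf{(III)} in the sectors $4\pi/3<\arg\rho<5\pi/3$ and $\pi/3<\arg\rho<2\pi/3$, obtains \textbf{(II)} via $\rho\mapsto-\rho$, and disposes of \textbf{(IV)} by the same one-line domination claim you make). The residual issue you flag---breaking the tie when $\DeltaP$ does carry the peaking family---is treated no more fully in the paper's own proof, so it is not a gap relative to the paper.
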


\begin{proof}
The asymptotic location of the zeros of $\DeltaP$ is given in Proposition~\ref{prop:3nRBC.ZerosDeltaP}.

We consider the form~{\bf(III)}. If $4\pi/3<\arg\rho<5\pi/3$ then, for some $j\in\{1,\ldots,2n\}$ and some constants $c,c'\in\C$,
\BE
\frac{\eta_j(\rho)}{\DeltaP(\rho)} = \int_0^1{\frac{ce^{-i(\omega+\omega^2x)\rho}+c'e^{-i(\omega^2+\omega x)\rho} + o\left(e^{|\rho|/2}\right)}{\omega^W e^{-i\omega\rho} + \omega^{2W} e^{-i\omega^2\rho} + O(1)}}q_T(x)\d x.
\EE
Hence this ratio blows up if
\BE \label{eqn:Blowup.Cond}
D \cap \{ \rho\in\C: 4\pi/3<\arg\rho<5\pi/3 \} \neq \emptyset,
\EE
that is, if $a\neq-i$.

However, if $\pi/3<\arg\rho<2\pi/3$ then
\BE
\frac{\eta_j(\rho)}{\DeltaP(\rho)} = \int_0^1{\frac{ce^{-i(1+\omega x)\rho}+c'e^{-i(1+\omega^2x)\rho} + o\left(e^{|\rho|/2}\right)}{\omega^W e^{-i\omega\rho} + \omega^{2W} e^{-i\omega^2\rho} + O(1)}}q_T(x)\d x,
\EE
and this ratio decays provided
\BE
D^+ \subset \{ \rho\in\C: \pi/3<\arg\rho<2\pi/3 \},
\EE
that is, if $a=-i$. The same argument can be applied to the sectors $\pi<\arg\rho<4\pi/3$ and $5\pi/3<\arg\rho<2\pi$.

We have shown that problems of class~{\bf(III)} have the claimed conditioning. The conditioning of problems belonging to class~{\bf(II)} follows immediately using the map $\rho\mapsto-\rho$. For problems in class~{\bf(IV)}, the exponentials in the numerator of $\eta_j/\DeltaP$ can never blow up faster than those in the denominator, within any sector.
\end{proof}

\begin{rmk} \label{rmk:Neither.Stronger}
For odd-order problems in both the present work and the work~\cite{Smi2012a}, we permit only $a=\pm i$. However the requirement~\eqref{eqn:Blowup.Cond} derived in the above proof explicitly forces $\arg a=-\pi/2$ for a well-conditioned problem belonging to class~{\bf(III)}. However, for the zeros of $\DeltaP$ to obey~\eqref{eqn:Wellposed.Bound.on.zeros} in a class~{\bf(III)} problem it is sufficient that $-\pi\leq\arg a\leq0$. In problems belonging to class~{\bf(III)} (and, similarly, class~{\bf(II)}) well-conditioning is a strictly stronger requirement than the asymptotic bound on the location of the zeros.

In contrast, any problem that is well-conditioned for some particular coupling constant $a$ and also for $-a$ must be well-conditioned for \emph{all} nonzero complex coupling constants. Problems belonging to class~{\bf(IV)} have this property. However, because the zeros of $\DeltaP$ lie on 6 rays, it is necessary that $a=\pm i$ for well-posedness. In problems of class~{\bf(IV)}, the asymptotic bound~\eqref{eqn:Wellposed.Bound.on.zeros} on the location of the zeros is strictly stronger than well-conditioning.
\end{rmk}

\subsection{Pseudo-periodic problems}
We expand on the pseudo-periodic~\cite[Example~5$\cdot$2]{Smi2012a}, showing how the values of the coupling constants affect to which class this problem belongs.

Let
\BE
A = \BP 1&\beta_1&0&0&0&0\\0&0&1&\beta_2&0&0\\0&0&0&0&1&\beta_2 \EP,
\EE
for some nonzero real coupling constants $\beta_1$, $\beta_2$ and $\beta_3$. Then
\BE
\DeltaP(\rho) = a\rho^3(\omega^2-\omega) \left[ \beta'' + \sum_{r=0}^2\left( \beta' e^{i\omega^r\rho} + \beta e^{-i\omega^r\rho} \right) \right],
\EE
where
\begin{subequations}
\begin{align}
\beta   &= \beta_1\beta_2+\beta_2\beta_3+\beta_3\beta_1, \\
\beta'  &= \beta_1+\beta_2+\beta_3, \\
\beta'' &= 3(\beta_1\beta_2\beta_3+1).
\end{align}
\end{subequations}
As all the coupling constants are real, no more than one of the quantities $\beta$, $\beta'$, $\beta''$ may be $0$ for any particular problem. Hence no pseudo-periodic problems belong to class~(I).

If $\beta=0$ then the problem belongs to class~{\bf(II)}, with
\BE
X = \beta''/\beta', \qquad W=0.
\EE
If $\beta'=0$ then the problem belongs to class~{\bf(III)}, with
\BE
X = \beta''/\beta, \qquad W=0.
\EE
Finally, if $\beta,\beta'\neq0$ then the problem belongs to class~{\bf(IV)}, with
\BE
X = \beta''/\beta', \qquad Y=\beta/\beta', \qquad W=0.
\EE
These results are summarised in Table~\ref{tab:3nRBC.2.3}

\subsection{Other problems}

\begin{table}
\centering
\caption{3\rd order non-Robin problems: 0 or 1 couplings}
\label{tab:3nRBC.0.1}
\bigskip

{\small
\begin{tabular}{clcccc}
Couplings & Description        & Class      &$W$& $X$               & $Y$ \\ \hline

0 & all BC at one side         & {\bf(I)}   &   &                   & \\ \hline

  & 2 BC at left,              &            &   &                   & \\
0 & 1 at right,                & {\bf(II)}  & 0 & 0                 & \\
  & all BC of different orders &            &   &                   & \\ \hline

  & 2 BC at left, 1 at right   &            &   &                   & \\
  & 2 of same order,           &            &   &                   & \\
0 & order(left \& right BC)    & {\bf(II)}  & 1 & 0                 & \\
  & $=$order(left BC)$-1$      &            &   &                   & \\
  & (mod 3)                    &            &   &                   & \\ \hline

  & 2 BC at left, 1 at right   &            &   &                   & \\
  & 2 of same order,           &            &   &                   & \\
0 & order(left \& right BC)    & {\bf(II)}  & -1& 0                 & \\
  & $=$order(left BC)$+1$      &            &   &                   & \\
  & (mod 3)                    &            &   &                   & \\ \hline

  & 1 BC at left,              &            &   &                   & \\
0 & 2 at right,                & {\bf(III)} & 0 & 0                 & \\
  & all BC of different orders &            &   &                   & \\ \hline

  & 1 BC at left, 2 at right   &            &   &                   & \\
  & 2 of same order,           &            &   &                   & \\
0 & order(left \& right BC)    & {\bf(III)} & 1 & 0                 & \\
  & $=$order(left BC)$-1$      &            &   &                   & \\
  & (mod 3)                    &            &   &                   & \\ \hline

  & 1 BC at left, 2 at right   &            &   &                   & \\
  & 2 of same order,           &            &   &                   & \\
0 & order(left \& right BC)    & {\bf(III)} & -1& 0                 & \\
  & $=$order(left BC)$+1$      &            &   &                   & \\
  & (mod 3)                    &            &   &                   & \\ \hline

1 & 2 BC at left               & {\bf(II)}  & 0 & $3/\beta_1$       & \\ \hline

1 & 2 BC at right              & {\bf(III)} & 0 & $3 \beta_1$       & \\ \hline

  & 1 BC at left,              &            &   &                   & \\
1 & 1 at right,                & {\bf(IV)}  & 0 & 0                 & $\beta_1$ \\
  & all BC of different orders &            &   &                   & \\ \hline
  
  & 1 BC at left, 1 at right   &            &   &                   & \\
  & of same order,             &            &   &                   & \\
1 & order(uncoupled BC)        & {\bf(IV)}  & 1 & 0                 & $-\beta_1$ \\
  & $=$order(coupled BC)$-1$   &            &   &                   & \\
  & (mod 3)                    &            &   &                   & \\ \hline

  & 1 BC at left, 1 at right   &            &   &                   & \\
  & of same order,             &            &   &                   & \\
1 & order(uncoupled BC)        & {\bf(IV)}  & -1& 0                 & $-\beta_1$ \\
  & $=$order(coupled BC)$+1$   &            &   &                   & \\
  & (mod 3)                    &            &   &                   & \\ 

\end{tabular}
}
\end{table}

\begin{table}
\centering
\caption{3\rd order non-Robin problems: 2 or 3 couplings}
\label{tab:3nRBC.2.3}
\bigskip

{\small
\begin{tabular}{clcccc}
Couplings & Description        & Class      &$W$& $X$               & $Y$ \\ \hline

  & 1 BC at right              &            &   &                   & \\
2 & and                        & {\bf(II)}  & 0 & $3\beta_1\beta_2$ & \\
  & $\beta_1+\beta_2=0$        &            &   &                   & \\ \hline

  & 1 BC at left               &            &   &                   & \\
2 & and                        & {\bf(III)} & 0 & $3/\beta_1\beta_2$& \\
  & $\beta_1+\beta_2=0$        &            &   &                   & \\ \hline

  & 1 BC at right              &            &   &                   & \\
2 & and                        & {\bf(IV)}  & 0 & $3\beta_1\beta_2$ & $\beta_1+\beta_2$ \\
  & $\beta_1+\beta_2\neq0$     &            &   &                   & \\ \hline

  & 1 BC at left               &            &   &                   & \\
2 & and                        & {\bf(IV)}  & 0 &$3/(\beta_1+\beta_2)$& $\beta_1\beta_2/(\beta_1+\beta_2)$ \\
  & $\beta_1+\beta_2\neq0$     &            &   &                   & \\ \hline

3 & $\beta=0$                  & {\bf(II)}  & 0 & $\beta''/\beta'$  & \\ \hline

3 & $\beta'=0$                 & {\bf(III)} & 0 & $\beta''/\beta$   & \\ \hline

3 & $\beta,\beta'\neq0$        & {\bf(IV)}  & 0 & $\beta''/\beta'$  & $\beta/\beta'$ \\ 

\end{tabular}
}
\end{table}

A 3\rd order problem with non-Robin boundary conditions need not be pseudo-periodic; there may be two, one or no coupled boundary conditions. Fully uncoupled problems were discussed in~\cite{Pel2004a} but we include them here for comparison. For brevity, we present the results in Tables~\ref{tab:3nRBC.0.1}--\ref{tab:3nRBC.2.3}, omitting the full derivations. We denote the $k$ coupling constants $\beta_1,\ldots,\beta_k$; the ordering is irrelevant.

Recall from Proposition~\ref{prop:3nRBC.ClassificationDeltaP} that $|Y|$ discriminates between classes~{\bf(IVa)}--{\bf(IVc)}.

\section{A counterexample} \label{sec:OldIncorrect}

If the arguments of~\cite[Section~3]{Smi2012a} are sound then it must be that, for any $\beta\in(0,1)$, the following initial-boundary value problems
\begin{align*}
(\partial_t + i(-i\partial_x)^3) q(x,t) &= 0, & & & (\partial_t - i(-i\partial_x)^3) q(x,t) &= 0, \\
q(x,0) &= q_0(x), & & & q(x,0) &= q_0(x), \\
q(0,t) = q(1,t) &= 0, & & & q(0,t) = q(1,t) &= 0, \\
\partial_xq(0,t) &= \beta \partial_xq(1,t), & & & \partial_xq(0,t) &= \beta \partial_xq(1,t)
\end{align*}
each have unique $C^\infty[0,1]\times[0,T]$ solutions that can be represented as series expansions
\begin{align*}
q(x,t) &= \sum_{k\in K^+} e^{i\sigma_kx - i\sigma_k^3t} f_k(q_0) & \quad q(x,t) &= \sum_{k\in K^+} e^{i\sigma_kx + i\sigma_k^3t} f_k(q_0) \\
&\hspace{5ex}+ \sum_{k\in K^-} e^{i\sigma_k(x-1) - i\sigma_k^3t} f_k(q_0),\quad & &\hspace{5ex}+ \sum_{k\in K^-} e^{i\sigma_k(x-1) - i\sigma_k^3t} f_k(q_0),
\end{align*}
where $\sigma_k\in\C^\pm$ are the zeros of the PDE characteristic determinant indexed by $k\in K^\pm$.

These series converge or diverge depending upon the asymptotic location of the $\sigma_k$. If $t=0$ then the series converge as $\Re(i\sigma_k x)<0$ for $k\in K^+$ and $\Re(i\sigma_k (x-1))<0$ for $k\in K^-$. It remains to consider $\Re(\pm i\sigma_k^3t)$.

By~\cite[Lemma~6.1]{Smi2012a} $\DeltaP(\omega\rho)=0$ if and only if $\DeltaP(\rho)=0$ for $\omega^3=1$ so we can reparametrize these series so that we only sum over one of the three cube roots of $\sigma_k^3$. Evaluating $\DeltaP$ yields two sequences of zeros $(\lambda_k)_{k\in\N}$ and $(\mu_k)_{k\in\N}$ such that
\BES
\{0\}\cup\{\omega^j\lambda_k,\omega^j\mu_k:j\in\{0,1,2\},k\in\N\}
\EES
contains each zero of $\DeltaP$ precisely once and
\begin{align*}
\lambda_k &= (2k-1/3)\pi + i\log\beta + O(\exp(-\sqrt{3}k\pi/2)), \\
\mu_k &= -(2k-1/3)\pi + i\log\beta + O(\exp(-\sqrt{3}k\pi/2)).
\end{align*}
Hence
\begin{align*}
\lambda_k^3 &= \left(\left[(2k-1/3)\pi\right]^3 - (2k-1/3)\pi\left[\log\beta\right]^2\right) \\
&\hspace{15ex} + i\left(\left[(2k-1/3)\pi\right]^2\log\beta-\left[\log\beta\right]^3\right) + O(k^3\exp(-\sqrt{3}k\pi/2)), \\
\mu_k^3 &= -\left(\left[(2k-1/3)\pi\right]^3 - (2k-1/3)\pi\left[\log\beta\right]^2\right) \\
&\hspace{15ex} + i\left(\left[(2k-1/3)\pi\right]^2\log\beta-\left[\log\beta\right]^3\right) + O(k^3\exp(-\sqrt{3}k\pi/2)).
\end{align*}
But then
\begin{align*}
\Re(-i\lambda_k^3) &= \left[\left(2k-1/3\right)\pi\right]^2\log\beta + O(1)\quad & \Re(i\lambda_k^3) &= -\left[\left(2k-1/3\right)\pi\right]^2\log\beta + O(1) \\
&<0, & &>0, \\
\Re(-i\mu_k^3) &= \left[\left(2k-1/3\right)\pi\right]^2\log\beta + O(1) & \Re(i\mu_k^3) &= -\left[\left(2k-1/3\right)\pi\right]^2\log\beta + O(1) \\
&<0, & &>0,
\end{align*}
so the series representation on the right hand side cannot converge for positive time. This means that the problem is ill-posed.

\appendix

\section{Errata Corrige} \label{sec:Corrections}

Owing to the correction of~\cite[Theorem~1$\cdot$1]{Smi2012a} adjustments to the statements of a number of other results in that work are necessary. We present the important corrections here. An unrelated typographical error is also noted.

The conditions of Theorem~1$\cdot$4 are sufficient but they can be weakened. Indeed, the reverse-time problem $\Pi'$ need not be well-posed, only \correctionemph{well-conditioned}.

Theorem~1$\cdot$5 should state that \correctionemph{well-conditioning} is equivalent for the two problems instead of well-posedness.

The conditions for a 4\Th order pseudoperiodic problem to be ill-posed are sufficient but \correctionemph{not necessary}. Defining
\BE
\beta = \frac{\M{\beta}{1}{3} + \M{\beta}{2}{2} + \M{\beta}{3}{1} + \M{\beta}{4}{0} + \M{\beta}{1}{3}\M{\beta}{2}{2}\M{\beta}{3}{1}\M{\beta}{4}{0}\left( \frac{1}{\M{\beta}{1}{3}} + \frac{1}{\M{\beta}{2}{2}} + \frac{1}{\M{\beta}{3}{1}} + \frac{1}{\M{\beta}{4}{0}} \right)}{\M{\beta}{1}{3}\M{\beta}{2}{2} + \M{\beta}{2}{2}\M{\beta}{3}{1} + \M{\beta}{3}{1}\M{\beta}{4}{0} + \M{\beta}{4}{0}\M{\beta}{1}{3} + 2(\M{\beta}{1}{3}\M{\beta}{3}{1} + \M{\beta}{2}{2}\M{\beta}{4}{0})},
\EE
the problem is ill-posed if and only if $|\beta|>1/2$ or the denominator is $0$.

The coefficients in Theorems~1$\cdot$3--4 are incorrect. Indeed, equation~(1$\cdot$8) should read
\begin{multline} \label{eqn:P1:Intro:thm.Reps.Int:q}
q(x,t) = i\sum_{k\in K^+}\res_{\rho=\sigma_k}\frac{e^{i\rho x-a\rho^nt}}{\DeltaP(\rho)} \sum_{j\in J^+} \zeta_j(\rho) + \frac{1}{2\pi}\int_{\partial\widetilde{E}^+}e^{i\rho x-a\rho^nt}\sum_{j\in J^+}\frac{\zeta_j(\rho)}{\DeltaP(\rho)}\d\rho \\
+ i\sum_{k\in K^-}\res_{\rho=\sigma_k}\frac{e^{i\rho(x-1)-a\rho^nt}}{\DeltaP(\rho)} \sum_{j\in J^-} \zeta_j(\rho) + \frac{1}{2\pi}\int_{\partial\widetilde{E}^-}e^{i\rho(x-1)-a\rho^nt}\sum_{j\in J^-}\frac{\zeta_j(\rho)}{\DeltaP(\rho)}\d\rho \\
- \frac{1}{2\pi}\left\{\sum_{k\in K^\mathbb{R}} \int_{\Gamma_k} + \int_{\mathbb{R}} \right\} e^{i\rho x-a\rho^nt} \left( \frac{1}{\DeltaP(\rho)}-1 \right)H(\rho) \d\rho.
\end{multline}
Equation~(1$\cdot$9) must be adjusted similarly.

\bigskip

The author is sincerely grateful to Beatrice Pelloni and Dmitry Pelinovsky for their helpful comments and criticism. This work was funded by the FP7 programme of the European Commission.

\bibliographystyle{amsplain}
\bibliography{dbrefs}
\end{document}